\date{}
\newtheorem{thm}{Theorem}
\newtheorem{lem}{Lemma}
\newtheorem{prp}{Proposition}
\newcommand{\C}{\Bbb C}
\begin{document}

\baselineskip 20pt
\title{Arakeljan's Theorem with bounded approximants}

\maketitle
\begin{center}

\author{Spyros Pasias}\\
\indent Orcid:0000-0001-6611-0155\\
\indent spyrospasias2393@outlook.com\\
\indent Paphos
\end{center}
\begin{abstract}
Arakeljan's Theorem provides conditions on a relatively closed subset $F$ of a domain $G\subset\mathbb{C}$, such that any continuous function $f:F\rightarrow\mathbb{C}$ that is analytic in $F^\circ$, can be approximated by analytic functions defined on $G$. In this paper we will extend Arakeljan's theorem by adding the extra requirement that the analytic functions that approximate $f$ may also be chosen to be bounded on a closed set $K\subset G.$ In \cite{RU} the same problem has been considered but for the specific case that $G=\mathbb{C}$.

\end{abstract}
\begin{center}

{Keywords}: Arakeljan's Theorem, $G-$holes, Arakeljan set, relatively closed subset\end{center}
\pagebreak
\section*{Arakeljan sets and Arakeljan's Theorem}

\indent In this paper we will generalize an important theorem in the field of Complex approximation, namely we will generalize \emph{Arakeljan's Theorem}. A more well known theorem is \emph{Mergelyan's Theorem} which states that any function that is continuous on a compact set $F\subset\mathbb{C}$ and analytic on $F^o$ can be uniformly approximated by polynomials on $F$, provided our set $F$ has a connected compliment.\\
\indent {Arakaljan's Theorem} on the other hand deals with analytic functions defined in more general sets $F$ that are not necessarily simply connected nor compact. Consequently we don't expect the approximation to be by polynomials. \\
\indent To this end let $G\subset\mathbb{C}$ be an arbitrary domain and $F\subset G$ a relatively closed subset of $G$. {Arakeljan's Theorem} provides conditions that determine whether a function $f\in A(F)$ can be approximated on $F$ by functions $g\in Hol(G)$ 
\begin{align*}
Hol(G)=\lbrace  g: g\text{ analytic in }G\rbrace \\
 A(F)=\lbrace  f: f\text{ analytic on }F^o,\text{ continuous on F}\rbrace
\end{align*}

\begin{thm}[Arakeljan's Theorem]
Let $F$ and $G$ be as above, then any $f\in A(F)$ can be uniformly approximated by functions $g\in Hol(G)$ if and only if $F$ is an Arakeljan set.
\end{thm}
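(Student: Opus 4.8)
The plan is to prove the two implications separately, beginning with the necessity, which is the more concrete direction, and then turning to the sufficiency, which carries the real weight of the theorem. Throughout I would argue against the backdrop of the equivalence established in Proposition 2, so that I am free to use whichever of Definition 2 or Definition 3 is more convenient at each step.

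For necessity I would prove the contrapositive: assuming $F$ fails to be an Arakeljan set, I would exhibit a function $f\in A(F)$ that admits no uniform approximation by elements of $Hol(G)$. If condition 1 of Definition 2 fails, then $F$ has a $G$-hole $g$ whose closure sits inside a compact $L\subset G$; choosing $a\in g$ and taking $f(z)=1/(z-a)$ on a neighborhood of $F$, every approximant $g_n\in Hol(G)$ is holomorphic across $g$, so $\oint_\gamma g_n\,dz=0$ for a curve $\gamma\subset F$ encircling $a$, whereas $\oint_\gamma f\,dz=2\pi i$; uniform convergence on the compact $\gamma$ then forces a contradiction. This is exactly the residue obstruction illustrated by the $1/z$ example in the Introduction. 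If instead condition 2 fails, the obstruction is topological rather than integral: by Proposition 2 this means $G^*\setminus F$ is not locally connected at $\infty$, and I would use the uncontrolled escape of holes of some $F\cup K$ toward $\partial G$ or $\infty$ to manufacture a function whose behavior near $\infty$ cannot be matched by any globally holomorphic function.

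For sufficiency I would follow the classical exhaustion-and-fusion strategy. First I would fix an exhaustion $G=\bigcup_n G_n$ by compact sets with $G_n\subset \operatorname{int} G_{n+1}$ and a summable tolerance sequence $\varepsilon_n\downarrow 0$. The two Arakeljan conditions guarantee that, after enlarging each $G_n$ to a compact $K_n$ with Jordan-curve boundary as in condition 2, the set $F\cap K_n$ enjoys the connected-complement property needed to invoke Mergelyan's Theorem, yielding a rational (or polynomial) approximant $p_n$ to $f$ on $F\cap K_n$. Since these local approximants need not agree on overlaps, the heart of the argument is a splicing step: given $p_n$ good on an inner region and a fresh approximant good on an outer annular region, I would fuse them into a single function holomorphic on a neighborhood of $K_{n+1}$ that stays within $\varepsilon_{n+1}$ of $f$ on $F\cap K_{n+1}$ while perturbing the previous stage by at most $\varepsilon_n$ on $F\cap K_n$.

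I expect this fusion step to be the main obstacle. Concretely I would isolate an \emph{Arakeljan fusion lemma}: for compact sets $K_0,K_1,K_2$ with $K_1\cap K_2=\emptyset$ and functions holomorphic near $K_0\cup K_1$ and near $K_0\cup K_2$ that agree to within $\delta$ on $K_0$, there exists a single function holomorphic near $K_0\cup K_1\cup K_2$ lying within a fixed multiple of $\delta$ of each. It is precisely condition 2 of Definition 2 — equivalently, local connectivity of $G^*\setminus F$ at $\infty$ — that keeps the holes of the intermediate sets $F\cup K_n$ confined to a compact subset of $G$, so that the fusion can be carried out without leaking new poles toward $\partial G$ or $\infty$. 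Granting the lemma, the telescoping limit $g=\lim_n g_n$ converges uniformly on $F$ (by summability of the $\varepsilon_n$) to a function in $Hol(G)$ approximating $f$ to within any prescribed tolerance, which completes the proof.
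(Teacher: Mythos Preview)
The paper does not actually prove this theorem: its entire ``proof'' is a citation to Arakeljan's original paper \cite{AR} and to Gaier's book \cite{GA}. So there is no argument in the paper to compare your proposal against; Theorem~1 is used as a black box throughout.

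That said, your outline is recognizably the classical strategy found in the cited references (particularly Gaier), so in that sense you are reconstructing the proof the paper defers to. A few points where your sketch would need tightening if you actually carried it out:

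\begin{itemize}
\item In the necessity argument when condition~1 fails, your contour-integral obstruction presupposes a closed curve $\gamma\subset F$ winding around the point $a$ in the $G$-hole. The boundary of a $G$-hole need not be a Jordan curve, so the existence of such a $\gamma$ is not automatic. The cleaner route is the maximum principle: uniform convergence of $g_n\to f$ on $\partial g\subset F$ forces uniform convergence on $\overline{g}$, whence the limit is holomorphic across the hole, contradicting the pole of $1/(z-a)$ at $a$.
\item Your treatment of the failure of condition~2 is only gestured at (``manufacture a function whose behavior near $\infty$ cannot be matched''); in the standard proofs this is where most of the work in the necessity direction lies, and it requires an explicit construction exploiting the infinitely many escaping holes.
\item The fusion lemma you isolate is morally Roth's fusion lemma as presented in Gaier; stating it precisely and verifying that the Arakeljan conditions give exactly the compact containment of holes needed for the induction is the substantive content of sufficiency.
\end{itemize}

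None of these are fatal gaps in a \emph{plan}, but since the paper itself offers only a citation, your write-up goes well beyond what the paper provides.
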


\noindent In \cite{GA} an Arakeljan set is defined differently than the definition we will use in this paper. In this paper we will use another definition which is also used in \cite{RU} to prove an extension of Arakeljan's Theorem for the case $G=\mathbb{C}$.\\

\noindent\textbf{Definition 1.}
Let $G\subset \mathbb{C}$ be an arbitrary domain and $F\subset G$ be a relatively closed subset of $G$. A connected component $g$ of $G\setminus F$ is called a $G-$hole of $F$ if: \begin{enumerate}
\item$\partial g\cap \partial G= \emptyset$. 
\item$\ g \text{ is bounded (a hole)}$.
 \end{enumerate}

\noindent\textbf{Remark 1.} We note that the definition of a $G-$hole can be restated as follows: A connected component $g$ of $G\setminus F$ is called a $G-$hole of $F$ if $g$ can be enclosed in a compact set $L\subset G$. Even though this alternative definition is more intuitive than the one introduced above; for the purpose of the proofs that follow the definition above is more useful.\vspace{0.12in}

\noindent Now that we defined what a hole is in the setting of Arakeljan's theorem on an arbitrary domain, we are ready to define Arakeljan sets.\vspace{0.12in}

\noindent\textbf{Definition 2.} Let $G$ be an arbitrary domain and let $F$ be a relatively closed subset, then we call $F$ an Arakeljan set if: \begin{enumerate}
\item The set $F$ has no $G-$holes.
\item For any connected compact set $K\subset G$ such that $\partial K$ is union of jordan curves the set $H\equiv\lbrace \bigcup \lbrace h\rbrace :h \text{ is a $G$-hole of }F\cup K\rbrace$ is bounded and $\partial H\cap \partial G=\emptyset$.
\end{enumerate}

\noindent\textbf{Remark 2.} By condition $1$ of {Definition 2} it follows that if $H$ has only a finite number of $G-$holes and $F$ has no $G-$holes, then $F$ is an Arakeljan set.\vspace{0.12in}

\section*{Description of Arakelan sets}
 In the proof of {Arakeljan's Theorem} in (\cite{GA}, p. 142-144) the definition used for $F$ to be an Arakeljan set is different than {Definition 2}. The definition used in (\cite{GA}, p. 142) is more topological but it turns out that both definitions are equivalent.\\
\indent  Now, in order to compare {Definition 2} with the more topologically flavored definition presented in (\cite{GA}, p. 142), we will need to some tools from topology. To this end let us recall the \emph{Alexandroff compactification} of an arbitrary domain $G\subset \mathbb{C}$. The set $G^*$ is defined by introducing the point $\infty$ so that $G^*=G\cup \lbrace\infty\rbrace$. The topology of $G^*$ is defined to consist from the open sets of $G$ and in addition all the sets that are compliments of compact subsets $K\subset G$. Under this topology one may check that $G^*$ is compact.\vspace{0.12in}
 
 Now we are ready to provide the definition of an Araklejan set as presented in (\cite{GA}, p. 142).\vspace{0.12in}

\noindent\textbf{Definition 3.} The conditions given on the set $F$ to be an Arakeljan set are: \begin{enumerate}
\item$G^*\setminus F$ is connected.
\item$G^*\setminus F$ is locally connected at $\infty$.
\end{enumerate}

 The following proposition introduced in (\cite{GA}, p. 133) provides us with an intuitive approach for determining whether $G^*\setminus F$ is connected.

\begin{prp}
The space $G^*\setminus F$ is connected if and only if each component of the open set $G\setminus F$ has an accumulation point on $\partial G$ or is unbounded.
\end{prp}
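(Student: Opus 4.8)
The plan is to translate the right-hand condition into a statement about the added point $\infty$, and then reduce the whole equivalence to two standard facts about connected sets. Since $F\subset G$, we have $G^*\setminus F=(G\setminus F)\cup\{\infty\}$; I would write $U=G\setminus F$, an open subset of $\mathbb{C}$ whose connected components are themselves open. Because $G=G^*\setminus\{\infty\}$ is open in $G^*$, the singleton $\{\infty\}$ is closed in $G^*\setminus F$, so $U$ is open there and each component $C$ of $U$ is open in $G^*\setminus F$ as well. These observations set up the clopen argument used later.

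The first and key step is to show that, for a component $C$ of $U$, the point $\infty$ lies in the closure $\overline{C}$ taken in $G^*$ if and only if $C$ is unbounded or has an accumulation point on $\partial G$. A neighborhood basis of $\infty$ in $G^*$ is given by the sets $G^*\setminus K$ with $K\subset G$ compact, so $\infty\in\overline{C}$ holds precisely when $C\not\subset K$ for every compact $K\subset G$, i.e. when $C$ cannot be enclosed in a compact subset of $G$. By Definition 1 together with Remark 1, this is exactly the statement that $C$ is \emph{not} a $G$-hole, namely that $C$ is unbounded or has an accumulation point on $\partial G$. Hence the right-hand condition of the proposition is equivalent to the assertion that $\infty\in\overline{C}$ for every component $C$ of $U$, and it remains only to match this with connectedness of $G^*\setminus F$.

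For the ``if'' direction, assuming $\infty\in\overline{C}$ for every component $C$, I would invoke that a connected set together with any collection of its limit points is connected, so each $C\cup\{\infty\}$ is connected; since all of these sets share the point $\infty$ and their union is $U\cup\{\infty\}=G^*\setminus F$, the union is connected. For the ``only if'' direction I would argue by contraposition: if some component $C_0$ fails the condition, then $\infty\notin\overline{C_0}$, so the closure of $C_0$ in $G^*\setminus F$ adds no new point and $C_0$ is closed there; being also open and a proper nonempty subset (it omits $\infty$), $C_0$ disconnects $G^*\setminus F$.

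I would treat the routine topology quickly: the openness and closedness of $\{\infty\}$ and of the components, and the two gluing lemmas for connected sets are standard. The step that deserves the most care, and which I expect to be the main point of the argument, is the first one: correctly unwinding the Alexandroff neighborhood basis so that ``$\infty\in\overline{C}$'' is identified with non-enclosability, and then using Definition 1 and Remark 1 to rewrite non-enclosability as ``unbounded or accumulating on $\partial G$.'' Once this dictionary is in place, the equivalence with connectedness of $G^*\setminus F$ follows immediately.
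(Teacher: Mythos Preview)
Your argument is correct and self-contained. The paper itself does not prove this proposition; it simply refers the reader to Gaier's \emph{Lectures on Complex Approximation}, pp.~133--134, so there is no in-paper proof to compare against. Your approach---identifying ``$\infty\in\overline{C}$'' with non-enclosability via the Alexandroff neighborhood basis, then running a clopen argument on the components of $G\setminus F$---is the standard route and is almost certainly what appears in Gaier. The one place a reader might pause is your contrapositive step: the reason the closure of $C_0$ in $G^*\setminus F$ adds no points of $U=G\setminus F$ is that components of any space are closed in that space, so $C_0$ is already closed in $U$; combined with $\infty\notin\overline{C_0}$ this gives closedness in $G^*\setminus F$. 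You clearly have this in mind, and it is routine.
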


\begin{proof}
The proof can be found in (\cite{GA}, p. 133-134).
\end{proof}

Even though the conditions of {Definition 3} given above seem different from the ones in {Definition 2}, actually they are the same.
\begin{prp}
Definition 2 and Definition 3 describing Arakeljan sets are equivalent.
\end{prp}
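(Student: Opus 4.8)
The plan is to prove the two conditions of Definition 2 and Definition 3 equivalent one pair at a time: first that (2.1) is equivalent to (3.1), and then that (2.2) is equivalent to (3.2). Throughout I would use that $G^*$ is compact Hausdorff (the one-point compactification of the locally compact Hausdorff space $G$), that the sets $N_K := \{\infty\}\cup(G\setminus K)$ with $K\subset G$ compact form a neighborhood basis of $\infty$ in $G^*$, and that the connected components of an open set $G\setminus S$ are themselves open, since $\mathbb{C}$ is locally connected.

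For the first equivalence I would unwind (2.1). A component $g$ of $G\setminus F$ is a $G$-hole precisely when it can be enclosed in a compact $L\subset G$; by Remark 1 this happens exactly when $g$ is bounded and $\partial g\cap\partial G=\emptyset$. The short observation to record is that, for a component $g\subset G$, having $\partial g\cap\partial G\neq\emptyset$ is the same as having an accumulation point on $\partial G$ (any $p\in\partial g\cap\partial G$ lies in $\overline g\setminus g$ and is such a point, and conversely). Negating, ``$F$ has no $G$-hole'' becomes ``every component of $G\setminus F$ is unbounded or has an accumulation point on $\partial G$'', which by Proposition 1 is exactly the assertion that $G^*\setminus F$ is connected, i.e.\ (3.1).

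For the second equivalence I would fix a compact $K\subset G$ and analyze $N_K(F):=G^*\setminus(F\cup K)=\{\infty\}\cup(G\setminus(F\cup K))$. The key structural lemma is that this space splits as $C_K\sqcup H(K)$, where $C_K$ is the connected component of $\infty$ and $H(K)$ is the union of the $G$-holes of $F\cup K$; the point is that each $G$-hole $h$ is clopen in $N_K(F)$, because $\overline h$ is a compact subset of $G$ with $\partial h\subset F\cup K$, so $\overline h\cap N_K(F)=h$, while every component having $\infty$ in its closure joins $\infty$. From this I would extract the crucial equivalence: $C_K$ is a neighborhood of $\infty$ if and only if $H(K)$ is contained in a compact subset of $G$. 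Indeed, if $H(K)\subset L$ with $L\subset G$ compact then $N_{K\cup L}\subset C_K$ exhibits $C_K$ as a neighborhood of $\infty$; conversely if $N_{K'}\subset C_K$ then $H(K)\subset K'$, using $H(K)=N_K(F)\setminus C_K$ and the fact that a subset of $G$ lies in a compact subset of $G$ exactly when its $G^*$-closure avoids $\infty$.

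With this in hand, (3.2) $\Rightarrow$ (2.2) is immediate: local connectedness at $\infty$ forces $C_K$ to be a neighborhood of $\infty$ for every compact $K$, hence for the connected $K$ with Jordan-curve boundary appearing in (2.2), so each such $H(K)$ is compactly contained. The harder direction, and what I expect to be the main obstacle, is (2.2) $\Rightarrow$ (3.2): here (2.2) only controls $H(K)$ for the special \emph{nice} sets $K$, whereas local connectedness must be tested against all neighborhoods of $\infty$. The plan is to close this quantifier gap by cofinality: every compact $K\subset G$ can be enclosed in a connected compact $\widetilde K\subset G$ whose boundary is a finite union of disjoint Jordan curves (fatten $K$ within its positive distance to $\partial G$, then fill and smooth, e.g.\ via a fine grid of closed squares), so the $N_{\widetilde K}$ are cofinal among the $N_K$. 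Since local connectedness at $\infty$ may be verified on any cofinal family of basic neighborhoods — given any neighborhood $U\ni\infty$, pick $\widetilde K$ with $N_{\widetilde K}\subset U$, whereupon $C_{\widetilde K}$ is a connected neighborhood of $\infty$ inside $U$ — condition (2.2) applied to these $\widetilde K$ yields precisely (3.2). The delicate points to secure are the clopen claim in the structural lemma and the explicit construction of $\widetilde K$ with Jordan-curve boundary remaining inside $G$.
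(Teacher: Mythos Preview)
Your proposal is correct and follows essentially the same approach as the paper: both invoke Proposition~1 for the first pair of conditions, and for the second pair both rely on the cofinality of connected compacta with Jordan-curve boundary among all compact subsets of $G$ to pass from the restricted hypothesis (2.2) to local connectedness at $\infty$. Your structural decomposition $N_K(F)=C_K\sqcup H(K)$ and the equivalence ``$C_K$ is a neighborhood of $\infty$ iff $H(K)$ is compactly contained'' make explicit what the paper's proof leaves as the one-line assertion ``this of course can only happen when the set of $G$-holes \dots cannot be contained in a compact subset of $G$'', but the skeleton of the argument is the same.
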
  

\begin{proof}
Indeed, let $G$ be an arbitrary domain and $F$ a relatively closed subset of $G$.
By {Proposition 1}, the set $G^*\setminus F$ is connected if and only if each component of the open set $G\setminus F$ has an accumulation point on $\partial G$ or is unbounded. Therefore, $G\setminus F$ is connected if and if every hole (component of $G\setminus F$) is not a $G-$hole. Therefore, conditions 1 of both definitions are equivalent.
Now we will show that conditions 2 of both definitions are equivalent as well.
Indeed, suppose that for any connected compact set $K\subset G$ such that $\partial K$ is the union of jordan curves, the set $H\equiv\lbrace \bigcup \lbrace h\rbrace :h \text{ is a $G$-hole of }F\cup K\rbrace$ is bounded and $\partial H\cap \partial G=\emptyset$. We will show that this implies that $G^*\setminus F$ is locally connected at $\infty$.
Now, recall that $G^*\setminus F$ being locally connected at $\infty$ by definition requires that every open set of $G^*\setminus F$ containing $\infty$ contains a connected sub-neighborhood containing $\infty$. However, every open set in $G^*\setminus F$ is simply the 
complement of a compact set $K\subset G\setminus F$. Suppose for a contradiction that $G^*\setminus F$ is not locally connected. Then there exists a compact set $K\subset G\setminus F$ such that the set  $G\setminus (F\cup K)$ has infinite components that eventually accumulate on $\partial G$ or are unbounded. Then since $G$ is open and path connected we can easily construct a connected compact set $L\supset K$ such that $\partial L$ is the union of jordan curves, and such that the set $H\equiv\lbrace \bigcup \lbrace h\rbrace :h \text{ is a $G$-hole of }F\cup L\rbrace$ is either unbounded or $\partial H\cap\partial G\neq\emptyset$, i.e a contradiction. The other direction is obvious since for any compact set $K\subset G$ that fails condition 2 of definition 2, the neighborhood $G^*\setminus(F\cup K)\subset G^*\setminus F$ is not locally connected at $\infty$. The proof is complete.
\end{proof}
\pagebreak

 \section*{Extending Arakeljan's Theorem}
 In \cite{RU} the specific case where $G=\mathbb{C}$ has been extended to the following:

\begin{thm}[Extending Arakeljan's Theorem when $G=\mathbb{C}$]
Let $F$ and $C$ be closed sets in the complex plane, with $C\not=\mathbb{C}$. In order that every function $f\in A(F)$ can be approximated uniformly on $F$ by entire functions, each of which is bounded on $C$, it is necessary and sufficient that: \begin{enumerate}
\item $F$ be an Arakeljan set.
\item There exists an Arakeljan set $C_1$, $C_1\not=\mathbb{C}$, so that $C\subset C_1$ and $F\cap C_1$ is a bounded set.
\end{enumerate}
\end{thm}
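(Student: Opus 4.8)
The plan is to prove both implications, treating the two conditions separately, and I expect the sufficiency direction to require the most care because the naive ``glue-and-apply-Arakeljan'' strategy breaks down.

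\emph{Necessity.} The necessity of (1) is immediate: approximability of every $f\in A(F)$ by entire functions that are bounded on $C$ entails approximability by entire functions with no side condition, so Arakeljan's Theorem (Theorem 1) forces $F$ to be an Arakeljan set. For (2) I would argue by contradiction. Assuming that no Arakeljan set $C_1\neq\mathbb{C}$ with $C\subseteq C_1$ and $F\cap C_1$ bounded exists, the set $C$ and the unbounded portion of $F$ must be \emph{linked at infinity}: there is no way to thicken $C$ to a hole-free, locally-connected-at-$\infty$ set while keeping its intersection with $F$ bounded. I would convert this geometric obstruction into an analytic one by constructing a test function $f\in A(F)$ that grows rapidly to $\infty$ along the offending parts of $F$, and then use a two-constants / maximum-modulus estimate to show that every entire $g$ with $|g-f|<1$ on $F$ is forced to be unbounded on $C$, contradicting the hypothesis. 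The delicate point here is producing the candidate $C_1$ concretely --- essentially $C$ together with a suitable selection of complementary components, trimmed so that $F\cap C_1$ stays bounded --- and checking that it satisfies the two defining conditions of an Arakeljan set (Definition 2).

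\emph{Sufficiency.} Here I would first record why the obvious approach fails, since this dictates the method. One would like to define a target $\Phi$ on $F\cup C_1$ by $\Phi=f$ on $F$ and $\Phi=(\text{a bounded function})$ on $C_1$, and then apply Theorem 1 to $F\cup C_1$. But $F\cup C_1$ need not be an Arakeljan set: two Arakeljan sets with bounded intersection can jointly enclose a bounded hole (for instance when $F$ and $C$ contribute complementary semicircular arcs of a circle together with a pair of rays), and on the boundary of such a hole one cannot prescribe the merely continuous data of $f$ and still extend analytically across the hole. Hence a single global gluing is impossible, and the argument must avoid prescribing any values on $C$ beyond requiring boundedness.

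Instead I would exploit two features of the hypotheses: because $F\cap C_1$ is bounded and both sets are locally connected at $\infty$, all the genuine interaction between $F$ and $C_1$ is confined to a fixed bounded region, and beyond a large disk $F$ and $C\subseteq C_1$ are separated; moreover boundedness of the approximant on the \emph{compact} part of $C$ is automatic. I would therefore exhaust $\mathbb{C}$ by disks $D_n$ and build the approximant as a limit of entire functions $g_n$: at stage $n$, improve the approximation of $f$ on $F\cap\overline{D_n}$ while holding $g_n$ close to $g_{n-1}$ on $\overline{D_{n-1}}$ (so the limit exists and stays $\varepsilon$-close to $f$ on $F$) and controlling the growth of $g_n$ on $C$. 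The passage from $g_n$ to $g_{n+1}$ would be carried out by a Roth-type fusion/localization argument in the annulus $D_{n+1}\setminus D_{n-1}$, where the separation of $F$ from $C$ and the Arakeljan property of $C_1$ make it possible to choose the correction small on $\overline{D_{n-1}}$, advancing on $F$, and bounded by a constant independent of $n$ on $C$. The main obstacle --- and the step where conditions (1) and (2) are used decisively --- is precisely this fusion: reconciling good approximation of a possibly unbounded $f$ on $F$ with a uniform bound on the unbounded set $C$, exactly in the presence of the holes of $F\cup C_1$.
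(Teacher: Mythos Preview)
Your plan diverges from the paper at both stages, and in sufficiency it overlooks the simple move that makes the ``glue-and-apply-Arakeljan'' strategy work after all.

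\textbf{Necessity.} Your contradiction sketch is vague (you announce a growth test function and a two-constants estimate, but also speak of ``producing the candidate $C_1$ concretely'', which is the opposite of a contradiction argument). The paper's argument is direct and essentially one line: apply the hypothesis to $\phi(z)=z\in A(F)$ to obtain an entire $g$ with $|z-g(z)|<1$ on $F$ and $g$ bounded on $C$. Lemma~1 (for any entire function bounded on a closed $C\neq\mathbb{C}$ there is an Arakeljan set $C_1\supset C$ on which it is still bounded) produces $C_1$. On $F\cap C_1$ one has $|z|\le |g(z)|+1$, hence $F\cap C_1$ is bounded. No contradiction, no growth estimates; the whole difficulty is packaged into Lemma~1, whose proof is a locally finite cover of $C$ by small disks together with filling in the resulting holes.

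\textbf{Sufficiency.} You are right that $F\cup C_1$ need not be Arakeljan, but the remedy is not a Roth-type fusion or an exhaustion scheme. The key fact you are missing is Lemma~2: \emph{in $\mathbb{C}$ the union of two disjoint Arakeljan sets is again an Arakeljan set}. This lets the naive gluing go through after a small modification. Choose a compact $L$ with $F\cap C_1\subset L$ (and $\partial L\setminus C_1\neq\emptyset$). Then $C_1\setminus L$ is still an Arakeljan set, and it is \emph{disjoint} from $F$; by Lemma~2, $F\cup(C_1\setminus L)$ is Arakeljan. Now define $h=f$ on $F$ and $h=0$ on $C_1\setminus L$; this is a legitimate element of $A\bigl(F\cup(C_1\setminus L)\bigr)$, and Arakeljan's Theorem yields an entire $g$ with $|g-h|<\varepsilon$ on $F\cup(C_1\setminus L)$. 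Thus $|g-f|<\varepsilon$ on $F$, $|g|<\varepsilon$ on $C_1\setminus L$, and $g$ is bounded on the compact set $C_1\cap L$, hence bounded on $C_1\supset C$.

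Your semicircles-plus-rays example does not obstruct this: once the compact $L$ swallows the shared endpoints (indeed the whole circle), $F$ and $C_1\setminus L$ are disjoint and Lemma~2 applies. The fusion machinery you propose would, at best, reprove Lemma~2 in disguise; as written it leaves the decisive step (``bounded by a constant independent of $n$ on $C$'') unexplained.
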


 The proof of the extended version of Arakeljan's Theorem for $G=\mathbb{C}$ stated above is based on the following two lemmas: 

\begin{lem} Let $f$ be an entire function, and let $C$ a closed subset in $G$, $C\not=G$. In order that $f$ be bounded on $C$ it is necessary and sufficient that there exists a closed Arakeljan set $C_1\subset G$ so that $C\subset C_1$ and $f$ is bounded on $C_1$.
\end{lem}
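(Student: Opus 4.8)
The plan is to prove the two implications separately; essentially all of the content lies in the necessity direction, while sufficiency is a one-line observation.

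\emph{Sufficiency.} If some closed Arakeljan set $C_1$ satisfies $C\subseteq C_1$ and $f$ is bounded on $C_1$, then $f$ is a fortiori bounded on the smaller set $C$, and there is nothing to prove.

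\emph{Necessity.} Suppose $f$ is entire and $|f|\le M$ on $C$. If $f$ is constant we may simply take $C_1=\mathbb C$, which is an Arakeljan set, so assume $f$ is non-constant. The natural candidate is the sublevel set $C_1:=\{z\in\mathbb C:|f(z)|\le M\}$: it is closed because $|f|$ is continuous, it contains $C$, and $f$ is bounded by $M$ on it by construction; moreover $C_1\ne\mathbb C$, since a non-constant entire function is unbounded by Liouville's theorem. Everything then reduces to checking that $C_1$ is an Arakeljan set, and I would verify the two conditions of Definition 2 with $G=\mathbb C$ (so that a $G$-hole is just a bounded component of the complement). Condition 1 is the clean part and follows from the maximum modulus principle: if $V$ were a bounded component of $\mathbb C\setminus C_1=\{|f|>M\}$, then $\partial V\subseteq\partial\{|f|>M\}\subseteq\{|f|=M\}$, and since $f$ is holomorphic on $V$ and continuous on the compact set $\overline V$ we would get $\max_{\overline V}|f|=\max_{\partial V}|f|=M$, contradicting $|f|>M$ on $V$. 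Hence every component of $\{|f|>M\}$ is unbounded, and by Proposition 1 the space $\mathbb C^*\setminus C_1$ is connected, i.e.\ $C_1$ has no $G$-holes.

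The crux is Condition 2. Fix a connected compact $K\subseteq\{|z|<R\}$ whose boundary is a union of disjoint Jordan curves, and let $V$ be any $G$-hole of $C_1\cup K$, that is, a bounded component of $\{|f|>M\}\setminus K$. Being connected and disjoint from $K$, $V$ lies in a single component of $\mathbb C\setminus K$: either in a bounded one, whence $V$ sits inside the ``filled'' set obtained from $K$ (contained in $\{|z|<R\}$) and causes no trouble, or in the unbounded component $O_\infty$. In the latter case $\partial V\subseteq\{|f|=M\}\cup\partial K$, and the maximum modulus argument above forces $\partial V$ to meet $\partial K$ and yields the pointwise bound $|f|\le\Lambda$ on $V$, where $\Lambda:=\max(M,\max_K|f|)$. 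What is left is to convert this into a \emph{geometric} confinement: I must show that these bounded ``fingers'' of the superlevel set, which live entirely in the channel $\{M<|f|\le\Lambda\}$, cannot reach arbitrarily large modulus, so that all $G$-holes of $C_1\cup K$ lie in one fixed compact set. In the language of Definition 3 this is exactly local connectedness of $\mathbb C^*\setminus C_1$ at $\infty$.

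I expect this last step to be the main obstacle. When $f$ is a polynomial it is automatic, because $|f|\to\infty$ makes $C_1$ compact and $\infty$ an interior point of the complement; but for transcendental $f$ the set $C_1$ is unbounded (by Casorati--Weierstrass $|f|$ drops below $M$ arbitrarily close to the essential singularity at $\infty$), so $\infty$ is a genuine boundary point and the confinement must be earned rather than assumed. I would attempt it through a Phragm\'en--Lindel\"of or harmonic-measure estimate applied to $\log|f|-\log M$ on the channels $\{M<|f|\le\Lambda\}$, ruling out fingers of unbounded reach; once the fingers are trapped in a compact set, Condition 2 holds and $C_1$ is the desired Arakeljan set.
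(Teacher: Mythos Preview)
Your sufficiency direction and your verification of Condition 1 for the sublevel set $C_1=\{|f|\le M\}$ are fine, but the proof is genuinely incomplete at Condition 2, as you yourself flag. You reduce the problem to showing that the bounded ``fingers'' of $\{|f|>M\}\setminus K$, which you correctly trap in the band $\{M<|f|\le\Lambda\}$, cannot reach arbitrarily far; however, a bound on $|f|$ along a finger is not a bound on the \emph{diameter} of the finger, and a Phragm\'en--Lindel\"of or harmonic-measure estimate on $\log|f|-\log M$ again controls values of $|f|$, not the geometry of the finger. Nothing you have written rules out a tract of $\{|f|>M\}$ shaped like a comb whose teeth, once severed by $K$, form bounded components of ever-increasing reach. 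Whether sublevel sets of transcendental entire functions are always Arakeljan is a nontrivial question in its own right, and you have not settled it; so as a proof of the lemma this does not close.

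The paper avoids this difficulty entirely by choosing $C_1$ differently. Instead of the global sublevel set, it thickens $C$ locally: for each $z\in C$ pick a small disk $U(z)$ on which $|f|<M+1$, extract a \emph{locally finite} subcover $\{U_n\}$, set $E=\bigcup_n\overline{U_n}$, and let $C_1$ be $E$ together with its $G$-holes. Boundedness of $f$ on $C_1$ follows from the maximum principle, and Condition 1 holds by construction. The payoff is that Condition 2 becomes an elementary finiteness count: any compact $K$ with Jordan boundary meets only finitely many of the $\overline{U_n}$, so $\partial K\setminus\bigcup_{m=1}^k\overline{U_{n_m}}$ is a finite union of arcs $I_1,\dots,I_p$; every $G$-hole of $C_1\cup K$ must contain at least one $I_j$ in its boundary (otherwise it would already be a $G$-hole of $C_1$), and distinct holes cannot share an $I_j$. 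Hence there are at most $p$ holes, and Remark 2 finishes. The moral is that by building $C_1$ to be locally finite rather than analytically natural, the second Arakeljan condition comes for free, with no appeal to growth theorems for entire functions.
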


\begin{lem}
The union of two disjoint Arakeljan sets $E$ and $F$ is again an Arakeljan set.
\end{lem}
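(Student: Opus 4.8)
The plan is to verify the two defining conditions of Definition~2 for the relatively closed set $E\cup F$ (which is relatively closed, being the union of two relatively closed sets). Throughout I would pass to the Riemann sphere $\widehat{\mathbb C}=\mathbb C\cup\{\infty\}$ and write $T=\widehat{\mathbb C}\setminus G$ for the closed complement of $G$. The point I intend to exploit is that, because $E\cap F=\emptyset$ and the frontier of any $G$-hole is compact, the two closed sets playing the roles of $E$ and $F$ near a hole are disjoint, so that the unicoherence of the sphere (Janiszewski's theorem) can be brought to bear.

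For condition~1 I would argue by contradiction. Suppose $g$ is a $G$-hole of $E\cup F$; then $\overline g\subseteq G$ is compact and its frontier splits as $\partial g=a\cup b$ with $a=\partial g\cap E$ and $b=\partial g\cap F$ two disjoint compact sets. Fix $p\in g$. Since $\overline g$ is a compact subset of $G$, every path from $p$ to a point of $T$ must cross $\partial g$, so $a\cup b$ separates $p$ from $T$ in $\widehat{\mathbb C}$, and in particular from $\infty$. Because $a$ and $b$ are disjoint compact sets, a standard consequence of the unicoherence of the sphere is that if $a\cup b$ separates two points then one of $a$, $b$ already does; hence, say, $a\subseteq E$ separates $p$ from the exterior. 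I would then take the component $g_E$ of $G\setminus E$ containing $p$: it is contained in the bounded component $W$ of $\widehat{\mathbb C}\setminus a$ that contains $p$, whose frontier lies in $a\subseteq E$, so $g_E$ is bounded, and I would argue it is a genuine $G$-hole of $E$, contradicting that $E$ is an Arakeljan set.

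The step I expect to be the main obstacle is precisely making ``$g_E$ is a genuine $G$-hole'' rigorous, i.e.\ guaranteeing that $\overline{g_E}$ stays inside $G$ and does not leak out to $\partial G$. Separating merely from the single point $\infty$ is not enough when $T$ is disconnected, since a bounded component of $G\setminus E$ could still accumulate on $\partial G$; this is the very phenomenon that forces the $G$-hole independence hypothesis in the main theorem. Here, though, disjointness should rescue the argument: I would first enclose $\overline g$ in a compact connected $L\subseteq G$ so that all the relevant separation takes place at positive distance from $T$, and run the argument with the closed sets $E\cup T$ and $F\cup T$, whose intersection is exactly $T$. Localizing in this way keeps the intersection of the two closed sets harmless near the hole and yields a component of $G\setminus E$ (or $G\setminus F$) with compact closure in $G$, the desired contradiction.

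Finally, for condition~2, given a connected compact $K$ whose $\partial K$ is a union of disjoint Jordan curves, I would cover the $G$-holes of $E\cup F\cup K$ by the $G$-holes of $E\cup K$, the $G$-holes of $F\cup K$, and $K$ itself. Since $E$ and $F$ are Arakeljan, the first two families are contained in compact sets $L_1,L_2\subseteq G$, and by condition~1 the set $E\cup F$ contributes no holes; thus $L_1\cup L_2\cup K$ gives the required compact enclosure. The only case that needs the separation technique again is a hole of $E\cup F\cup K$ whose enclosing components in both $G\setminus(E\cup K)$ and $G\setminus(F\cup K)$ reach $T$, and I would dispose of it exactly as in the treatment of condition~1.
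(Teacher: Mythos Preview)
Your proposal is aimed at a general domain $G$, but in the paper Lemma~2 is the $G=\mathbb{C}$ statement quoted from \cite{RU} (and not proved in the paper); the general-$G$ analogue is Lemma~4, where the extra hypothesis of $G$-hole independence is added. Remark~3 shows that for general $G$ disjointness alone is \emph{not} enough: with $G=\{0<|z|<1\}$, $E=\{|z|=\tfrac12\}$, $F=\{|z|=\tfrac14\}$ one has two disjoint Arakeljan sets whose union has the annulus $\{\tfrac14<|z|<\tfrac12\}$ as a $G$-hole. Your ``rescue'' via the closed sets $E\cup T$ and $F\cup T$ fails exactly here: their intersection $T=\{0\}\cup\{|z|\ge 1\}\cup\{\infty\}$ is disconnected, so Janiszewski's theorem does not apply, and indeed neither $E$ nor $F$ alone separates a point of the annulus from all of $T$ ($E$ blocks the outer boundary, $F$ blocks the puncture, but neither blocks both). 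Enclosing $\overline g$ in a compact connected $L\subseteq G$ does not help either: the components of $G\setminus E$ and of $G\setminus F$ meeting the annulus still accumulate on $\partial G$, so no $G$-hole of $E$ or of $F$ is produced and no contradiction arises. Thus the claim ``disjointness should rescue the argument'' is precisely the missing hypothesis that the paper replaces by $G$-hole independence.

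Restricted to $G=\mathbb{C}$ your unicoherence argument does give condition~1 cleanly, but the paper's route (as carried out in the proof of Lemma~4) is structurally different. Instead of a separation argument, it fixes a compact $K$ with Jordan-curve boundary and, for each $G$-hole $h_i$ of $E\cup F\cup K$, forms the connected compact $K_i=\overline{\partial d_i\setminus\partial K}\subseteq E\cup F$; connectedness together with $E\cap F=\emptyset$ forces $K_i\subseteq E$ or $K_i\subseteq F$, which traps $h_i$ inside a $G$-hole of $E\cup K$ or of $F\cup K$, and the Arakeljan property of $E$ and $F$ then yields the compact containment. Your proposed covering of the $G$-holes of $E\cup F\cup K$ by those of $E\cup K$ and $F\cup K$ aims at the same endpoint, but the residual case you single out (both enclosing components reach $T$) is disposed of in your sketch by the very separation device that the counterexample above defeats.
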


Our goal now is to move past the restriction $G=\mathbb{C}$ and prove the following extended version of Arakeljan's Theorem.

\begin{thm}[Main Theorem]
Let $G\subset \mathbb{C}$ be an arbitrary domain and suppose $F$ is a relatively closed subset of $G$, and $C$ is a closed subset in $G$ such that \(F\) and \(C\) are \(G-\)hole independent. In order that every function $f\in A(F)$ can be approximated uniformly on $F$ by functions $g\in Hol(G)$, each of which is bounded on $C$, it is necessary and sufficient that: \begin{enumerate}
\item $F$ is an Arakeljan set.
\item There exists a closed Arakeljan set $C_1\not=G$, so that $C\subset C_1$ and $F\cap C_1$ is a bounded set.
\end{enumerate}
\end{thm}

\section*{Proofs}
 In order to prove the Extension of Arakeljan's Theorem we have to generalize {Lemma 1} and Lemma 2 for a general domain $G$. The reason we did not use the more intuitive definition for $G-$holes (see {Remark 1}) is so that we can adjust the proofs presented in \cite{RU} to fit the case of an arbitrary domain $G$.
 
\noindent We shall prove a generalization of Lemma 1 of Rubel-Danielyan.

\begin{lem}\label{level}
If $f$ is holomorphic on a Riemann surface $\Omega$ and $M\ge 0,$ then the level set $L_M=\{z:|f(z)|= M\}$ is locally connected. 
\end{lem}

\begin{proof}If $f$ is constant or if $M=0,$ the assertion is clear. We thus assume that $f$ is non-constant and $M>0.$ 

By a Moebius change of variables in the target domain, we may instead show that the level set $Y_0 = \{z: \Im f(z)=0\}$ is locally connected.  
 Fix a point in  in a local coordinate of $\Omega$ which me may assume to be zero. We may also assume that $f(0)=0.$ Let $n$ be the multiplicity of the zero at $0.$ 
There is a homeomorphism $h(z)=w$ mapping a neighbourhood of $z=0$ to the disc $|w|<1,$  such that $f(z)=[h(z)]^n=w^n.$ Thus, locally near $0$, $Y_0$ is topologically equivalent to $n$ radial segments emanating from $0$, which is connected.   
\end{proof}

\begin{lem}\label{level Arakelyan}.  If $f$ is holomorphic on a Riemann surface and $M\ge 0,$ then the sublevel set $S_M=\{z:|f(z)|\le M\}$ is an $\Omega$-Arakelyan set.  
\end{lem}

 For $\Omega=\C,$ this is due to Danielyan and Schmieder.  
\begin{proof}
If $f$ is constant,  the conjecture is trivial, so we suppose $f$ is not constant. 

By the maximum principle,  $S_M$ has no holes.

If $S_M$ is not locally connected at $\infty,$ we can show that $\partial S_M$  is not locally connected.  However, $\partial S_M$ is the level set $L_M$ which is locally connected by Lemma \ref{level}.  This contradiction proves the Lemma. 
\end{proof}

The following lemma is then a corollary. 

\begin{lem}
A holomorphic function $f$ on a Riemann surface $\Omega$ is bounded on a closed set $F\subset\Omega$, if and only if $F$ is contained in an $\Omega$-Arakelian set on which $f$ is bounded.   
\end{lem}

For $\Omega=\C,$ this is  Lemma 1 in Danielyan-Rubel. 
 \noindent Now, lets prove the analogue of {lemma 2} on an arbitrary domain $G$.

\begin{lem}
Let $G$ be an arbitrary open domain in $\mathbb{C}$ and suppose $E$ and $F$ are two disjoint Arakeljan sets in $G$. Then the set $E\cup F$ is an Arakeljan set in $G$.
\end{lem}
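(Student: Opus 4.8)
My plan is to verify directly the two defining conditions of Definition 2 for the union $E\cup F$: first, that it has no $G$-holes, and second, that for every connected compact $K\subset G$ with $\partial K$ a union of disjoint Jordan curves, the $G$-holes of $(E\cup F)\cup K$ are contained in a compact subset of $G$. The first condition is where the $G$-hole independence hypothesis of Definition 5 enters and is short, while the second is the substantial part and will draw on the Arakeljan property of $E$ and of $F$ separately.

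For the first condition, let $h$ be a connected component of $G\setminus(E\cup F)=(G\setminus E)\cap(G\setminus F)$. Then $h$ lies in a unique component $h_E$ of $G\setminus E$ and a unique component $h_F$ of $G\setminus F$, and since $h$ is a maximal connected subset of $(G\setminus E)\cap(G\setminus F)$ it is in fact a component of the smaller open set $h_E\cap h_F$. Because $E$ and $F$ are Arakeljan they have no $G$-holes, so $h_E$ and $h_F$ are \emph{strict} holes in the sense of Definition 4. Definition 5 then says precisely that no component of $h_E\cap h_F$ can be a $G$-hole; in particular $h$ is not a $G$-hole. Since $h$ was arbitrary, $E\cup F$ has no $G$-holes, so by Remark 2 it remains only to control the holes created by adjoining $K$.

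For the second condition, fix such a $K$. Since $E$ is Arakeljan the $G$-holes of $E\cup K$ lie in a compact set $L_E\subset G$, and likewise those of $F\cup K$ lie in a compact $L_F\subset G$. Given a $G$-hole $h$ of $(E\cup F)\cup K$, let $h_E'$ and $h_F'$ be the components of $G\setminus(E\cup K)$ and $G\setminus(F\cup K)$ that contain it. If $h_E'$ is a $G$-hole of $E\cup K$ then $h\subset h_E'\subset L_E$, and symmetrically $h\subset L_F$ if $h_F'$ is a $G$-hole. The remaining (and only problematic) case is that both $h_E'$ and $h_F'$ are strict holes. Here I would first show that such an $h$ must abut $K$: if $\partial h\cap K=\emptyset$ then $\partial h\subset E\cup F$ inside $G$, so $h$ is relatively clopen in the component $w$ of $G\setminus(E\cup F)$ containing it, forcing $h=w$; but $w$ is a strict hole of $E\cup F$ by the analysis of the first condition, contradicting that $h$ is enclosable. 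Hence $\partial h\cap\partial K\neq\emptyset$. Moreover, since $E$ and $F$ are disjoint closed sets, the parts of $\partial h$ lying on $E$ and on $F$ have disjoint closures, so any passage of $\partial h$ from $E$ to $F$ must occur on $\partial K$; thus each such hole genuinely consumes arcs of $\partial K$, and distinct holes consume disjoint arcs.

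The crux, and the step I expect to be the main obstacle, is to confine these last (``hard-case'') holes to a compact subset of $G$. The delicate phenomenon is that adjoining the compact set $K$ can pinch a strict hole of $E\cup F$ into an enclosable piece, and I must rule out that such pieces escape to $\infty$ or to $\partial G$. Using that every hard-case hole abuts the compact set $\partial K$ and consumes arcs of it, I would argue, in the spirit of the boundary-interval count in the proof of Lemma 3, that an escaping sequence of such holes cannot occur: holes abutting the fixed compact $\partial K$ while reaching arbitrarily far would, together with the disjoint $E/F$ separation of their boundaries, force $E\cup K$ or $F\cup K$ to acquire $G$-holes lying outside $L_E\cup L_F$, contradicting the Arakeljan property used to produce $L_E$ and $L_F$. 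Granting this, every $G$-hole of $(E\cup F)\cup K$ lies in the compact set $L_E\cup L_F$ together with a compact neighborhood of $K$, which verifies the second condition and completes the proof. I expect this escape-exclusion argument to require the most care, since it is exactly the point where both Arakeljan hypotheses and the compactness of $K$ must be used jointly.
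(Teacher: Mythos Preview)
Your framework for condition~2 is natural and in fact aligns with the paper's: the paper's argument ultimately shows that for \emph{every} $G$-hole $h$ of $E\cup F\cup K$, one of $h_E'$ or $h_F'$ is already a $G$-hole (of $E\cup K$ or $F\cup K$), so your containment in $L_E\cup L_F$ would finish immediately. In other words, your ``hard case'' is in fact empty --- but this is precisely the substantive point, and your proposal does not establish it.

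The missing idea is the paper's filling-in construction. For each $G$-hole $h_i$ of $E\cup F\cup K$, the paper adjoins to $\overline{h_i}$ all $G$-holes of $\overline{h_i}\cup K$ and takes the interior $d_i$ of the result; then $\partial d_i$ consists of an arc on $\partial K$ together with a \emph{connected} compact piece $K_i=\overline{(\partial d_i)\setminus\partial K}\subset E\cup F$. Connectedness of $K_i$ and disjointness of $E,F$ force $K_i\subset E$ or $K_i\subset F$; if, say, $K_i\subset E$, then $\partial d_i\subset E\cup K$, so $d_i\setminus(E\cup K)$ is clopen in $G\setminus(E\cup K)$ and hence $h_E'\subset d_i$, which makes $h_E'$ a $G$-hole of $E\cup K$. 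This is exactly the step your sketch gestures at (``force $E\cup K$ or $F\cup K$ to acquire $G$-holes'') but does not supply. Your alternative route via ``each hard-case hole consumes an arc of $\partial K$'' does not by itself bound the holes: touching a fixed compact set $\partial K$ does not prevent the $h_i$ from escaping toward $\partial G$, and ``consuming disjoint arcs'' yields at best countability, not compact confinement. Once you insert the $d_i$/$K_i$ argument, your $L_E,L_F$ conclusion goes through, and the paper then finishes by contradiction exactly as you anticipate.
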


\begin{proof}
The sets $E$ and $F$ are both without $G-$holes. Hence, their union is without $G-$holes. Assume for a contradiction that there exists a connected compact subset $K\subset G$ such that $\partial K$ is the union of jordan curves and such that $H$, the set of all $G-$ holes of $E\cup F\cup K$ is either unbounded or $\partial H\cap \partial G\not=\emptyset$. Then by {Remark 2} it follows the set $E\cup F\cup K$ has infinite $G-$holes. Consequently, the set $H$ must have infinitely many components i.e $H=\bigcup_{i=1}^\infty \lbrace h_i\rbrace$ where each $h_i$ represents a $G-$hole of $E\cup F\cup K$. Now for fixed $i$, let $a_k^i,\ \ k=1,2,...$ be all the $G-$holes of $\overline{h_i}\cup K$. The connected compact set $\overline{h_i}\cup\bigcup_k \overline{ a_k^i} $ has no $G-$holes. Denote by $d_i$ the interior of this connected compact set which contains $h_i$. Now $\partial d_i$ consists of an open arc on $\partial K$\footnote{Since $\partial K$ is the union of jordan curves and $K$ is connected} and a connected compact set which we denote by $K_i$. That is, $K_i=\overline{(\partial d_i)\setminus\partial K}.$ Clearly, $K_i\subset E\cup F$. Hence $K_i$ lies completely either on $E$ or in $F$. Let $i_n$,\ \ $n=1,2,...,$ be all the natural numbers for which $K_{i_n}\subset E$ and let $i_l,\ \ l=1,2,...,$ be the remaining natural numbers such that $K_{i_l}\subset F$. Now by our assumption the set $H=\bigcup_{i=1}^\infty \lbrace h_i\rbrace$ is either unbounded or $\partial H\cap \partial G\not=\emptyset$ or both. The case where $H$ is unbounded is already dealt with on \cite{RU}. For the case $\partial H\cap \partial G\not=\emptyset$ we simply note that since $H$ has an accumulation point on $\partial G$, it follows that one or both of the sets $\lbrace h_{i_n}\rbrace $ or $\lbrace h_{i_l}\rbrace$ has an accumulation point on $\partial G$. Without loss of generality let's assume that $\lbrace h_{i_n}\rbrace $ has an accumulation point on $\partial G$. Now let us consider the holes of $\lbrace h_{i_n}\rbrace \cup K$. Since $K_{i_n}\subset E$ and $h_{i_n}$ is a $G-$hole of $E\cup F\cup K$, we see that there exists a $G-$hole $V_{i_n}$ of $E\cup K$ so that $h_{i_n}\subset V_{i_n}\subset d_{i_n}$. Hence the set $\bigcup_{n=1}^\infty\lbrace V_{i_n}\rbrace$ has an accumulation point on $\partial G$, which implies that the set $E$ is not an Arakeljan set. This is a contradiction hence the lemma is proved.
\end{proof}

\indent \indent Now that we have the two lemmas in our arsenal we can prove our man result but first we need to introduce an important definition.

\noindent\textbf{Definition 4.} Let \(G\) be an arbitrary domain in \(\mathbb{C}\) and let \(E\) and \(F\) be relatively closed subsets of \(G\). We say that \(E\) and \(F\) are hole independent if for any \(G-\)unbounded component \(U\) of \(E\) and \(G-\)unbounded component \(V\) of \(F\), the intersection \(U\cap V\) is \(G-\)bounded.

Hole independence is needed in the proof of sufficiency of the man theorem.

\begin{thm}[Main Theorem]
Let $G\subset \mathbb{C}$ be an arbitrary domain and suppose $F$ is a relatively closed subset of $G$, and $C$ is a closed subset in $G$ such that \(F\) and \(C\) are \(G-\)hole independent. In order that every function $f\in A(F)$ can be approximated uniformly on $F$ by functions $g\in Hol(G)$, each of which is bounded on $C$, it is necessary and sufficient that: \begin{enumerate}
\item $F$ is an Arakeljan set.
\item There exists a closed Arakeljan set $C_1\not=G$, so that $C\subset C_1$ and $F\cap C_1$ is a bounded set.
\end{enumerate}
\end{thm}

\begin{proof}[Main Theorem]

\textbf{(Necessity.)} Assume that any $f\in A(F)$ can be approximated uniformly by functions $g\in Hol(G)$, each of which is bounded on $C$. By Arakeljan's Theorem it follows $F$ is an Arakeljan set. Clearly the function $\phi(z)=z$ belongs to the class $Hol(G)$, thus there exists a function $f\in Hol(G)$ such that: \begin{equation}|z-f(z)|<1,\   \ z\in F  
\end{equation}and $f$ is also bounded on C.
 By {lemma 1} there exists a closed Arakeljan set $C_1\subset \mathbb{C}$ such that that $C\subset C_1$ and $f(z)$ is bounded on $C_1$. By $(1)$ clearly $F\cap C_1$ is bounded. The proof of necessity is complete.
 
 \noindent \textbf{(Sufficiency.)} We have Arakeljan sets $F$ and $C_1$, $C\subset C_1$, such that $F\cap C_1$ is a bounded set. Let $K$ be a compact set in $G$ such that $\partial K$ is the union of jordan curves and so large that:
 \begin{equation} 
  F\cap C_1\subset K
   \end{equation}
  \begin{equation}
  (\partial K)\setminus C_1\not=\emptyset
   \end{equation}

\noindent Note that $(3)$ is possible since $C_1\not=G$ is a closed set without $G-$holes.\\
 The set $C_1\setminus K$ is also without $G-$holes, because otherwise there would exist a $G-$hole $g$ of $C_1\setminus K$. This leads to a contradiction since we must have $\partial g\subset C_1$. However, since $C_1$ has no $G-$holes it follows that $g=K$ and this contradicts $(3)$.\\
 The set $C_1\setminus K$ is also an Arakeljan set. Otherwise, there would exist a compact set $L\subset G$ so that the set $H\equiv \lbrace h: h \text{ is a }G-\text{hole of }L\cup(C_1\setminus K)\rbrace$ is either unbounded or $\partial H\cap \partial G\not=\emptyset$. But this is a contradiction because then the set of $G-$holes of the set $C_1\setminus K$ union the compact set $L\cup K$ would also have to either be unbounded or accumulate to a point in $\partial G$. Indeed, the claim follows, since $(L\cup K)\cup (C_1\setminus K)=C_1\cup K$. This contradicts that $C_1$ is an Arakeljan set, as such, by the contradiction we conclude $C_1\setminus K$ is an Arakeljan set.\\
 Now the Arakeljan sets $F$ and $C_1\setminus K$ are disjoint by $(2)$, therefore by the previous lemma we have that $F\cup (C_1\setminus K)$ is an Arakeljan set. \\
 Let $\phi(z)\in A(F)$ be any function. Define a function $h(z)$ by $h(z)=\phi(z)$ on $F$ and $h(z)=0$ on $C_1\setminus K$. Clearly by $(2)$ and the lemmas above it is evident that $f\in A(F\cup (C_1\setminus K))$. Therefore by Arakeljan's theorem $f$ is uniformly approximable by functions in $A(G)$. Therefore for any $\epsilon>0$, there exists a function $f\in A(G)$ such that $|h(z)-f(z)|<\epsilon$ for any $z\in F\cup (C_1\setminus K)$. Hence we have that $|\phi(z)-f(z)|<\epsilon$ on $F$ and $|f(z)|<\epsilon$ on $C_1\setminus K$. The function $f(z)$ is bounded on $C_1\setminus K$, moreover $f(z)$ is bounded on the compact set $C_1\cap K$ thus, $f(z)$ uniformly approximates $\phi(z)$ on $F$ and is bounded on $C$. The proof is complete.
\end{proof}

\pagebreak

\end{document}